\documentclass[12pt]{amsart}

\usepackage{fullpage}
\usepackage{mathtools}
\usepackage{amssymb,amsmath,amsthm,amscd,mathrsfs,graphicx}
\usepackage[dvipsnames]{xcolor}
\usepackage{bm}
\usepackage{dsfont}
\usepackage{enumerate}
\usepackage{epsfig}
\usepackage{float, graphicx}
\usepackage{latexsym, amsxtra}
\usepackage{pdfpages}
\usepackage{multicol}
\usepackage[normalem]{ulem}
\usepackage{psfrag}
\usepackage{stmaryrd}
\usepackage{tikz}
\usepackage[T1]{fontenc}
\usepackage{url}
\usepackage{verbatim}
\usepackage{xy}
\usepackage{indentfirst}
\usepackage{tikz-cd}
\usepackage{url}

\usepackage[
  colorlinks=true,
  linkcolor=red,
  urlcolor=blue,
  citecolor=blue
]{hyperref}

\flushbottom

\makeatletter
\def\thm@space@setup{%
  \thm@preskip=2ex \thm@postskip=2ex
}
\makeatother

\oddsidemargin=0in
\evensidemargin=0in
\textwidth=6.5in
\setlength{\unitlength}{1cm}
\setlength{\parindent}{0.6cm}

\numberwithin{equation}{section}
\theoremstyle{plain}

\newtheorem{thm}{Theorem~}[section] 
\newtheorem{lem}[thm]{Lemma~}

\newtheorem{prop}[thm]{Proposition~}

\theoremstyle{remark}
\newtheorem{rmk}[thm]{Remark~}

\theoremstyle{definition}
\newtheorem{defn}[thm]{Definition~}

\newcommand{\calD}{\mathcal{D}}

\newcommand{\calC}{\mathcal{C}}

\newcommand{\calL}{\mathcal{L}}

\newcommand{\CC}{\mathbb{C}}
\newcommand{\ZZ}{\mathbb{Z}}
\newcommand{\RR}{\mathbb{R}}
\newcommand{\LL}{\mathbb{L}}
\newcommand{\PP}{\mathbb{P}}

\newcommand\Co{\mathrm{Co}}

\newcommand\rank{\mathrm{rank}}

\newcommand\II{\mathrm{II}}

\newcommand\rO{\mathrm{O}}

\DeclareMathOperator{\Aut}{Aut}

\title{A Lemma on Leech-like Lattices}

 \author[Z. Zheng]{Zhiwei Zheng}
\address{Tsinghua University, China}
\email{zhengzhiwei@mail.tsinghua.edu.cn}
\date{}

\begin{document}
\bibliographystyle{amsalpha}

\begin{abstract} 
A Leech pair is defined as a pair $(G,S)$, where $S$ is a positive definite even lattice without roots, equipped with a faithful action of a finite group $G$, such that the invariant sublattice of $S$ under the action of $G$ is trivial, and the induced action of $G$ on the discriminant group of $S$ is also trivial. This structure appears naturally when investigating hyperk\"ahler manifolds and the symplectic automorphisms acting on them. An important lemma due to Gaberdiel--Hohenegger--Volpato asserts that a Leech pair $(G,S)$ admits a primitive embedding into the Leech lattice if $\rank(S)+\ell(A_S)\le 24$. However, the original proof is incomplete, as demonstrated by a counterexample provided by Marquand and Muller. They also presented a computer-assisted proof of the lemma for cases where $\rank(S) \leq 21$. In this paper, we modify the original approach to provide a complete and conceptual proof of the lemma.
\end{abstract}

\maketitle

\section{Introduction}
In this paper we provide a conceptual proof for the following result:
\begin{lem}
\label{lemma: main}
Suppose $(G,S)$ is a Leech pair (see Definition \ref{definition: leech pair}) such that $\rank(S)+\ell(A_S)\le 24$, then there exists a primitive embedding of $S$ into the Leech lattice $\LL$. Under this embedding, the action of $G$ on $S$ extends to an action on $\LL$ such that $G$ acts trivially on the orthogonal complement of $S$ in $\LL$.
\end{lem}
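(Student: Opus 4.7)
The plan is to apply Nikulin's theory of primitive embeddings to first realize $S$ inside some Niemeier lattice, then use the Leech pair structure of $(G,S)$ to arrange that this Niemeier lattice is $\LL$.

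I would begin with Nikulin's existence theorem for even lattices with prescribed discriminant form. The hypothesis $\rank(S)+\ell(A_S)\le 24$ translates into $\rank(T)\ge\ell(A_T)$ for a putative orthogonal complement $T$ of rank $24-\rank(S)$ and discriminant form $q_T\cong -q_S$; combined with the signature congruence (automatic as $24\equiv 0\pmod 8$) and positive definiteness, Nikulin's criterion guarantees such a $T$ exists. Gluing $S$ and $T$ along the graph of an anti-isometry $\phi\colon A_S\xrightarrow{\sim} A_T$ produces an even unimodular positive definite rank-$24$ lattice $N$, i.e.\ a Niemeier lattice, in which $S$ sits primitively. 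I then extend the $G$-action by letting $G$ act trivially on $T$: since $G$ acts trivially on $A_S$ (by the Leech pair condition) and on $A_T$ (by choice), $\phi$ is automatically $G$-equivariant, and the action extends to $N$, with $N^G=T$ because $S^G=0$. This settles every claim of the lemma except the identification $N=\LL$.

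The main obstacle, and the step where the original GHV argument breaks down, is to arrange $N$ to be root-free. Any root $r\in N$ decomposes as $r=r_S+r_T$ with $r_S\in S^*$, $r_T\in T^*$, and $r_S^2+r_T^2=2$; since $S$ has no roots, $r_T\ne 0$. In the case $r_S=0$ one needs $T$ itself to be root-free, and I would exploit the slack $\rank(T)-\ell(A_T)\ge 0$ together with structural moves within the genus of $T$ (replacing a root-lattice summand by a non-root summand with matching discriminant contribution) to produce such a representative. In the mixed case $r_S,r_T\ne 0$, the $G$-equivariance gives $g(r)-r=g(r_S)-r_S\in S$ for every $g\in G$, while $\sum_{g\in G}g(r)=|G|\,r_T\in T$ using $(S\otimes\RR)^G=0$; combined with the triviality of $G$ on $A_S$ and the short-vector bound $r_T^2<2$, these constraints reduce mixed roots to a small set of numerical configurations that I expect to exclude uniformly from the Leech pair axioms. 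The hardest part will be consolidating these two sub-cases, especially the mixed one, into a single conceptual argument that avoids the case-by-case computer checks of Marquand--Muller and thereby closes the gap in the GHV proof while preserving its overall strategy.
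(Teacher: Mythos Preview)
Your proposal takes a route that is fundamentally different from the paper's, and the central step is not a proof but a hope. You correctly identify that the entire difficulty lies in arranging the Niemeier lattice $N$ to be root-free, and then you do not actually do this. For the case $r_S=0$ you invoke ``structural moves within the genus of $T$'' to find a rootless representative, but no such moves are supplied, and in low rank the genus may well contain no rootless member (e.g.\ if $\rank T=1$ and $A_T\cong\ZZ/2$ then $T\cong A_1$ is forced). For mixed roots you record the $G$-equivariance constraints $g(r)-r\in S$ and $\sum_g g(r)\in T$, but these only say $|G|\,r_T\in T$ and $r_S\in S^\ast$ with $r_S^2<2$; they do not by themselves exclude such configurations, and you acknowledge as much (``I expect to exclude'', ``the hardest part will be consolidating''). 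In effect you have restated the lemma: a primitive embedding $S\hookrightarrow\LL$ exists if and only if some choice of $T$ and gluing $\phi$ produces a rootless overlattice, and the question is precisely whether such a choice can always be made.

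The paper bypasses this problem entirely. Rather than gluing inside a rank-$24$ lattice and trying to kill roots, it uses Nikulin to embed $S\oplus A_1$ primitively into $\II_{25,1}$, and then exploits Conway's description of the Weyl chambers of $\II_{25,1}$ in terms of Weyl vectors and Leech roots. The new idea (beyond Gaberdiel--Hohenegger--Volpato) is to choose the Conway chamber $\calC_w$ so that not only $S^\perp_\calD\cap\calC_w\neq\emptyset$, but in addition the reflection hyperplane $H_\alpha$ of the $A_1$-generator $\alpha$ is a \emph{wall} of $\calC_w$. This forces $\alpha$ to be a Leech root, i.e.\ $(\alpha,w)=\pm1$, so $\langle\alpha,w\rangle\cong U$ is a hyperbolic plane with $\langle\alpha,w\rangle^\perp\cong\LL$; since $S$ was already primitive in $\langle\alpha,w\rangle^\perp$, the resulting embedding $S\hookrightarrow\LL$ is primitive. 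The extra $A_1$ summand is the device that converts the non-primitive GHV embedding into a primitive one, and nothing like it appears in your outline.
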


Lemma \ref{lemma: main} was first stated by Gaberdiel, Hohenegger and Volpato \cite[Appendix B.2]{gaberdiel2012symmetries}. Recently, Marquand and Muller observed that the original proof is incomplete. Specifically, the argument in \cite{gaberdiel2012symmetries} only establishes the existence of an embedding of $S$ into the Leech lattice, but does not show the primitiveness of such an embedding. As noted in \cite[Example 4.12]{marquand2025finite}, the approach of \cite{gaberdiel2012symmetries} does not directly yield the desired primitiveness condition.

In \cite[Corollary 4.19]{marquand2025finite}, Marquand and Muller established Lemma~\ref{lemma: main} for $\rank(S) \leq 21$ using a computer-assisted approach. This result is crucial to the present work, as it provided strong evidence that a conceptual proof of Lemma~\ref{lemma: main} should exist.

Lemma \ref{lemma: main} has been used by many subsequent papers; see \S\ref{section: leech pair} for a discussion. These applications only rely on the cases with $\rank(S)\le 21$. Thanks to \cite[Corollary 4.19]{marquand2025finite}, these results remain valid. 


In this paper, we modify the original proof in \cite{gaberdiel2012symmetries} and provide a conceptual proof for Lemma \ref{lemma: main}. This establishes an alternative proof of Lemma \ref{lemma: main} for cases $\rank(S)\le 21$, and completes the proof for the remaining cases $22\le \rank(S)\le 24$. These latter cases may become relevant in future work on hyperk\"ahler varieties in positive characteristic.

We will prove the following extended version of Lemma \ref{lemma: main}:

\begin{prop}
\label{proposition: extended version}
Suppose $(G,S)$ is a Leech pair, then the following statements are equivalent:
\begin{enumerate}
\item $\rank(S)+\ell(A_S)\le 24$.
\item There exists a primitive embedding $S\oplus A_1\hookrightarrow \II_{25,1}$.
\item There exists a primtive embedding of $S$ into the Leech lattice.
\end{enumerate}
\end{prop}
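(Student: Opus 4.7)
The plan is to prove the equivalence via the implications $(3) \Rightarrow (1)$, $(3) \Rightarrow (2)$, $(1) \Rightarrow (2)$, and $(2) \Rightarrow (3)$, with only the last requiring substantive new argument.

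For $(3) \Rightarrow (1)$: setting $T := S^\perp_\LL$, we have $\rank(T) = 24 - \rank(S)$ and, by unimodularity of $\LL$, $A_T \cong -A_S$; the universal bound $\ell(A_T) \le \rank(T)$ yields (1). For $(3) \Rightarrow (2)$: compose $S \hookrightarrow \LL$ with the standard primitive embedding $\LL \hookrightarrow \II_{25,1} \cong \LL \oplus U$, and embed $A_1$ primitively into the hyperbolic plane $U$ as $\ZZ(e + f)$ with $(e+f)^2 = 2$ for a hyperbolic basis $\{e, f\}$. For $(1) \Rightarrow (2)$: apply Nikulin's criterion for primitive embeddings into even unimodular lattices; existence of $S \oplus A_1 \hookrightarrow \II_{25,1}$ reduces to the existence of an even lattice of signature $(24 - \rank(S), 1)$ and discriminant form $-q_{S \oplus A_1}$, which Nikulin's existence theorem provides since $\ell(q_{S \oplus A_1}) \le \ell(A_S) + 1 \le 25 - \rank(S)$ by (1), the required signature is indefinite, and the signature-mod-$8$ compatibility holds automatically.

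The main step is $(2) \Rightarrow (3)$. Given a primitive embedding $\iota \colon S \oplus A_1 \hookrightarrow \II_{25,1}$, set $v := \iota(A_1)$ and $T := \iota(S \oplus A_1)^\perp$. By Nikulin's uniqueness of indefinite even lattices with given signature $(24, 1)$ and discriminant $\ZZ/2$, the orthogonal complement $v^\perp$ is isomorphic to $\LL' \oplus A_1(-1)$ for a Leech sublattice $\LL' \subseteq v^\perp$. The plan is to show that $T$ decomposes as $T \cong K \oplus A_1(-1)$ for some positive definite even lattice $K$ of rank $24 - \rank(S)$, discriminant $-q_S$, and no roots. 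Given such a decomposition (together with the correct glue class, so that the saturation of the $A_1$ factor of $S \oplus A_1$ and the $A_1(-1)$ factor of $T$ in $\II_{25,1}$ is a hyperbolic plane $U \subseteq \II_{25,1}$), the orthogonal complement of $U$ in $\II_{25,1}$ is forced to be the unique positive definite rank-$24$ even unimodular root-free lattice $\LL$, and $S$ sits primitively inside this $\LL$. Primitivity of $S \hookrightarrow \LL$ is inherited from primitivity of $\iota$ by a routine saturation argument.

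The principal obstacle is the existence of such a decomposition $T \cong K \oplus A_1(-1)$ with $K$ root-free. Existence of some $K$ with the correct signature and discriminant is provided by Nikulin's existence theorem (via (1)). When Nikulin's uniqueness for indefinite even lattices applies to $T$ (which requires $\rank(T) \ge \ell(q_T) + 2$, a condition nearly equivalent to (1)), the lattice $T$ is isomorphic to $K \oplus A_1(-1)$ for any such $K$, so the question reduces to finding a root-free representative in the genus of $K$. This is handled by a genus-theoretic argument using the root-freeness of $S$ (from the Leech pair assumption), which rules out glue-generated roots in the would-be Niemeier overlattice of $S \oplus K$ and forces the overlattice to be $\LL$. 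The boundary cases $\rank(S) \in \{23, 24\}$ (where Nikulin's uniqueness fails or $T$ has rank $\le 2$) are handled by direct analysis; in particular, $\rank(S) = 24$ forces $A_S = 0$ and hence $S \cong \LL$ by (1), making the statement immediate.
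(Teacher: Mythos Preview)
Your argument for $(2) \Rightarrow (3)$ has a genuine gap at its central step: the existence of a root-free representative $K$ in the genus of $T/A_1(-1)$. You write that this ``is handled by a genus-theoretic argument using the root-freeness of $S$ \ldots\ which rules out glue-generated roots in the would-be Niemeier overlattice of $S \oplus K$ and forces the overlattice to be $\LL$.'' But this is circular. Root-freeness of $S$ only excludes roots lying in $S$; it does not exclude roots in $K$ itself, nor glue roots with nontrivial $K$-component. If you pick an arbitrary $K$ in the genus and glue, the resulting Niemeier lattice can perfectly well have a nonzero root system, and then $K$ sits inside some $N \ne \LL$ rather than inside $\LL$. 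Conversely, concluding that the overlattice is $\LL$ already presupposes that $K$ is root-free. Nothing you have written breaks this circle. A secondary issue is the glue compatibility: even granting an abstract splitting $T \cong K \oplus A_1(-1)$, you have not argued that the anti-isometry $A_{S} \oplus A_{A_1} \to A_K \oplus A_{A_1(-1)}$ coming from $\II_{25,1}$ respects the direct-sum decompositions, which is what you need for the $A_1$ and $A_1(-1)$ factors to saturate to a hyperbolic plane.

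The paper takes a completely different route, and this is precisely where the Leech-pair hypothesis (not merely root-freeness of $S$) enters. One works inside the Lobachevsky cone of $\II_{25,1}$ and uses Conway's description of the Weyl chamber decomposition: each chamber $\calC_w$ is indexed by a Weyl vector $w$ with $w^\perp/\ZZ w \cong \LL$, and the simple roots for $\calC_w$ are exactly the Leech roots, i.e.\ roots pairing $-1$ with $w$. The key geometric step is to find a chamber $\calC_w$ that both meets $S^\perp_\calD$ and has $H_\alpha$ as a wall, where $\alpha$ generates the $A_1$ summand; this is possible because $S$ is root-free, so $\pm\alpha$ are the only roots whose hyperplanes contain $S^\perp_\calD \cap H_\alpha$. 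The $G$-action (fixing $S^\perp$ pointwise and hence preserving $\calC_w$) forces $G$ to fix $w$, so $w \in S^\perp$. Since $\alpha$ is then a Leech root, $(\alpha,w) = \pm 1$, the span $\langle \alpha, w\rangle$ is a hyperbolic plane, and its orthogonal complement is a rank-$24$ even unimodular positive-definite lattice, necessarily $\LL$, into which $S$ embeds primitively. This produces the root-free complement directly rather than by searching a genus.
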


We explain the idea for the proof of Proposition \ref{proposition: extended version}. The implication $(3)\Rightarrow (1)$ is straightforward. The implication $(1)\Rightarrow (2)$ was proved by Gaberdiel--Hohenegger--Volpato \cite[Proof of Proposition B.2] {gaberdiel2012symmetries} based on Nikulin criterion \cite[Theorem 1.12.2]{nikulin1980integral}. So the main part we will deal with is the implication $(2)\Rightarrow (3)$. In \cite[Appendix B.2] {gaberdiel2012symmetries}, Gaberdiel, Hohenegger and Volpato actually proved that: 

\begin{prop}[\cite{gaberdiel2012symmetries}]
\label{proposition: gaberdiel}
For a Leech pair $(G,S)$, if there exists a primitive embedding of $S$ into $\II_{25,1}$, then there exists an embedding (which is not necessarily primitive) of $S$ into the Leech lattice $\LL$.
\end{prop}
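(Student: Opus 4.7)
My plan is to produce a primitive isotropic vector $e \in T := \iota(S)^{\perp} \subset \II_{25,1}$ for which the associated Niemeier lattice $N_e := e^{\perp}/\langle e \rangle$ is isometric to $\LL$. Given such an $e$, the composition $S \hookrightarrow e^{\perp} \twoheadrightarrow N_e \cong \LL$ is injective (since $S$ being positive definite and $e$ being isotropic force $S \cap \langle e \rangle = 0$) and isometric (since $\langle e \rangle$ is the radical of the form on $e^{\perp}$), providing the desired embedding.

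First I would split a hyperbolic plane off $T$. The complement $T$ has signature $(25-\rank(S),1)$ and discriminant form $A_T\cong -A_S$; the Leech pair hypothesis combined with the existence of the primitive embedding of $S$ into $\II_{25,1}$ yields $\rank(T) \ge \ell(A_T) + 2$, equivalently $\rank(S) + \ell(A_S) \le 24$. By Nikulin's splitting theorem \cite{nikulin1980integral} this forces $T = U \oplus T'$ with $U = \II_{1,1}$ and $T'$ positive definite of rank $24 - \rank(S)$ satisfying $A_{T'} \cong -A_S$. Taking $e$ to be a primitive isotropic generator of the $U$-summand and computing directly, $N_e$ identifies with the unique minimal even unimodular overlattice of $S\oplus T'$ built from the discriminant isomorphism $A_S \cong -A_{T'}$ inherited from the primitive embedding $S \oplus T \hookrightarrow \II_{25,1}$. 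This already produces an embedding $S \hookrightarrow N_e$ into some Niemeier lattice.

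The main obstacle will be ensuring $N_e = \LL$, or equivalently, that $N_e$ is root-free. Apart from roots of $S$ (which vanish by hypothesis), the roots of $N_e$ come from either (a) roots of $T'$, or (b) "glue" norm-$2$ vectors $s^{*}+t^{*}\in N_e$ with $s^{*}\in S^{*}\setminus S$, $t^{*}\in (T')^{*}\setminus T'$ and $(s^{*})^{2}+(t^{*})^{2}=2$. If both (a) and (b) are vacuous then we are done. Otherwise I would vary $e$ among all primitive isotropic vectors of $T$, not only those in $U$, since distinct $e$'s produce distinct positive definite complements and glue data, and hence potentially distinct Niemeier lattices. The hardest step will be to guarantee that the Leech orbit among primitive isotropic vectors of $\II_{25,1}$ actually meets $T$; I plan to combine the root-freeness of $S$ (which, together with the primitivity of $\iota$, ensures every norm-$2$ vector of $\II_{25,1}$ has a nonzero $T$-component) with a genericity argument ruling out the possibility that the countable union of root hyperplanes $r^{\perp}\cap T$ contains every primitive isotropic direction of $T$.
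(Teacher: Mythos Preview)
Your plan has two genuine gaps.

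\textbf{The claimed inequality $\rank(T)\ge \ell(A_T)+2$ is not a consequence of the hypotheses.} A primitive embedding $S\hookrightarrow \II_{25,1}$ only gives $\ell(A_S)\le 26-\rank(S)$, and nothing in the Leech pair condition improves this to $\le 24-\rank(S)$. In fact the paper explicitly records (citing \cite[Example~4.12]{marquand2025finite}) a Leech pair $(G,S)$ with a primitive embedding into $\II_{25,1}$ but \emph{no} primitive embedding into $\LL$; by the equivalence $(1)\Leftrightarrow(3)$ of Proposition~\ref{proposition: extended version}, that $S$ satisfies $\rank(S)+\ell(A_S)>24$. So your Nikulin splitting $T\cong U\oplus T'$ is unavailable in general, and the first paragraph of your plan collapses.

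\textbf{The ``genericity'' step does not produce an integral Weyl vector.} You need a \emph{lattice} point $e\in T$ on the isotropic cone avoiding all root hyperplanes $H_r\cap T_\RR$. The set of primitive isotropic vectors of $T$ is countable, and so is the set of roots; a Baire/measure argument on the real isotropic cone says nothing about the integral points. There is no obvious way to rule out that every primitive isotropic vector of $T$ lies on some $H_r$.

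The paper's argument (due to Gaberdiel--Hohenegger--Volpato) sidesteps both issues by working in the \emph{interior} of the Lobachevsky cone and using the group $G$, which you never invoke. Since $S$ is rootless, no root hyperplane contains $S^\perp_\calD$, and local finiteness of the root arrangement in $\calD$ gives a Conway chamber $\calC_w$ meeting $S^\perp_\calD$. Because $G\le\widetilde{\rO}(S)$, the $G$-action extends to $\II_{25,1}$ fixing $S^\perp$ pointwise; hence $G$ fixes a point of $\calC_w$ and therefore fixes $\calC_w$ itself. Conway's theorem (Theorem~\ref{theorem: conway}) then forces $G$ to fix the Weyl vector $w$, and the hypothesis $S^G=0$ yields $w\in (\II_{25,1})^G=S^\perp=T$. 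This is exactly the Weyl vector in $T$ you were seeking, and the composition $S\hookrightarrow w^\perp\to w^\perp/\ZZ w\cong\LL$ finishes the proof. The essential ingredient you are missing is that the bijection between Weyl vectors and Conway chambers converts a fixed \emph{chamber} into a fixed \emph{isotropic lattice vector} of the right type.
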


As demonstrated by the counterexample \cite[Example 4.12]{marquand2025finite}, conditions in Proposition \ref{proposition: gaberdiel} cannot ensure existence of a primitive embedding of $S$ into $\LL$. What we will argue in \S\ref{section: proof} is to fully use the statement $(2)$ in Proposition \ref{proposition: extended version} to construct a primitive embdding of $S$ into the Leech lattice. The key idea is a careful selection of a Conway chamber with respect to which a generator of $A_1$ maps to a simple root in $\II_{25,1}$.

\begin{rmk}
This remark was kindly communicated to me by Stevell Muller. All the statements (Lemma~\ref{lemma: main}, Proposition~\ref{proposition: extended version}, and Proposition~\ref{proposition: gaberdiel}) are really about the lattice $S$. The condition that $S$ fits into a Leech pair $(G,S)$ (see Definition~\ref{definition: leech pair}) is equivalent to the requirement that the action of $\widetilde{\mathrm{O}}(S)$ (the group of automorphisms of $S$ that act trivially on the discriminant group $A_S$) on $S$ has no nonzero fixed vectors. This equivalence allows us to formulate the results without referring to the group $G$. However, we have kept the notion of a Leech pair to stay consistent with the existing literature.
\end{rmk}

\section{Leech Pair}
\label{section: leech pair}
We now introduce some terminology that will be used. We refer the reader to \cite{nikulin1980integral} for more details on integral lattices. For an even lattice $M$, we denote by $A_M$ its discriminant group. Let $\rO(M)$ denote the group of isometries of $M$, and let $\widetilde{\rO}(M)$ denote the subgroup consisting of those isometries that act trivially on $A_M$. A positive definite even lattice is said to be rootless if it contains no vectors of norm two. 

There exists a unique (up to isomorphism) positive definite, rootless, even unimodular lattice of rank $24$, known as the Leech lattice, denoted by $\LL$. We set $\Co_0=\rO(\LL)$ and $\Co_1\coloneqq \Co_0/\{\pm 1\}$; both groups are referred to as Conway groups. In particular, $\Co_1$ is a sporadic simple group of order $2^{21}\cdot 3^9\cdot 5^4\cdot7^2 \cdot 11 \cdot 13 \cdot 23$.

For an integral lattice $M$ equipped with an action of a group $G$, we denote by $M^G$ the invariant sublattice, and let $S_G(M)$ be its orthogonal complement in $M$, referred to as the coinvariant sublattice. Both $M^G$ and $S_G(M)$ are primitive sublattices of $M$.

We now give the precise definition of a Leech pair, which was first introduced by Mongardi \cite[Definition 2.5.1]{mongardi2013automorphisms}.
\begin{defn} 
\label{definition: leech pair}
We call $(G,S)$ a Leech pair, if $S$ is a positive definite even rootless lattice, and $G$ is a subgroup of $\widetilde{\rO}(S)$ such that $S^G=(0)$.
\end{defn}

Then $(\Co_0, \LL)$ is a Leech pair. For a subgroup $G<\Co_0$, the pair $(G,S_G(\LL))$ is also a Leech pair, called a subpair of $(\Co_0, \LL)$. We call a subpair $(G, S_G(\LL))$ to be saturated, if $G=\widetilde{\rO}(S_G(\LL))$. H\"ohn and Mason \cite[Table 1]{HM16} classified all saturated subpairs of $(\Co_0, \LL)$, and found exactly $290$ cases.

Leech pairs arise naturally in the study of the classification of finite groups acting on various types of hyperk\"ahler geometries. They arise, for instance, in the classification of:
\begin{enumerate}
    \item finite groups of symplectic automorphisms of hyperk\"ahler manifold of type $K3^{[n]}$ \cite{huybrechts2016derived}\cite{mongardi2016towards}\cite{hohn2019finite},
    \item symplectic automorphism groups of smooth cubic fourfolds \cite{laza2022symplectic},
    \item finite groups of symplectic birational transforms of hyperk\"ahler manifold of $OG10$ type \cite{marquand2025finite},
    \item finite groups of symplectic biratonal automorphisms of hyperk\"ahler manifolds of type $K3^{[3]}$ \cite{billi2025EPW},
    \item finite groups of symplectic automorphisms of K3 surfaces in positive characteristic \cite{OS24}\cite{wang2024finite}.
\end{enumerate}
These works all rely on Lemma~\ref{lemma: main} for $\rank(S) \leq 21$, which was established by Marquand and Muller~\cite[Corollary~4.19]{marquand2025finite} using a computer-assisted method. By combining this with H\"ohn--Mason classification \cite[Table 1]{HM16}, one can analyze, and in many cases even classify, such finite groups using lattice-theoretic methods.

We now review the case of cubic fourfolds in more detail. In \cite{laza2022symplectic} the author and Laza classified the symplectic automorphism groups of all smooth cubic fourfolds $X$ in $\CC\PP^5$. An automorphsm of $X$ is called symplectic if it acts trivially on $H^{3,1}(X)$. Let $G=\Aut^s(X)$ denote the group of symplectic automorphisms of X. Then the pair $(G, S_G(H^4(X, \ZZ)))$ forms a Leech pair. By Lemma \ref{lemma: main}, the lattice $S_G(H^4(X, \ZZ))$ admits a primitive embedding into the Leech lattice. Therefore, the pair $(G, S_G(H^4(X, \ZZ)))$ is isomorphic to a subpair of $(\Co_0, \LL)$, and hence $G$ is isomorphic to a subgroup of the Conway group $\Co_0$. Moreover, this subpair can be shown to be saturated. Together with the H\"ohn--Mason classification \cite[Table 1]{HM16}, this implies that there are only finitely many possible isomorphism types for the pair $(G, S_G(H^4(X, \ZZ)))$. This argument plays a key role in the classification obtained in \cite{laza2022symplectic}.

\section{Borcherds Lattice and Conway Chamber}
\subsection{Weyl Chamber for Hyperbolic Lattices}
We first review the theory of Weyl groups and Weyl chambers for hyperbolic lattices. We refer the readers to \cite[\S1, Proposition 1.10]{Ogu83} and  \cite[Chapter 8, \S2]{Huy16} for more details. 

Let $L$ be an even lattice of signature $(n,1)$ with $n\ge 1$. Let $\calD$ and $-\calD$ be the two connected components of 
\[
\{x\in L_\RR\big{|} (x,x)<0\},
\]
and denote by $\overline{\calD}$ the closure of $\calD$ in $L_\RR$. We refer to $\calD$ as the Lobachevsky cone. 

An element $v\in L$ is called a root if $(v,v)=2$. Each root $v\in L$ defines a reflection hyperplane $H_v$, consisting of all points in $L_\RR$ that are orthogonal to $v$. The collection of all such  hyperplanes intersect with $\calD$ in a locally finite arrangement. A Weyl chamber for $\II_{25,1}$ is, by definition, a connected component of 
\[
\calD\cup (-\calD)-\bigcup_{v: \text{ roots in } L} H_v.
\]
The Weyl group $\mathcal{W}_L$ of $L$ is by definition the group generated by all reflections in roots. This group acts simply transitively on the set of Weyl chambers that are contained in $\calD$. 

Given a Weyl chamber $\calC$, we denote by $\overline{\calC}$ the closure of $\calC$ in $\calD\cup (-\calD)$. A root $v$ is called a simple root with respect to $\calC$ if the following two conditions are satisfied:
\begin{enumerate}[(i)]
\item the intersection $H_v\cap\overline{\calC}$ has codimension one in $L_\RR$, and
\item for all $x\in \overline{\calC}$, we have $(v,x)\le 0$.
\end{enumerate}
In this case we also say that $H_v$ is a wall supporting $\calC$.

\subsection{Borcherds Lattice and Conway Chamber}
There exists a unique even unimodular lattice $\II_{25,1}$ of signature $(25,1)$, known as the Borcherds lattice. In what follows, we take $L=\II_{25,1}$. For this particular choice of lattice, the corresponding Weyl chambers are referred to as Conway chambers.

A Weyl vector for the lattice $\II_{25,1}$ is a primitive isotropic vector $w$ such that the quotient lattice $w^{\perp}/\ZZ w$ is isomorphic to the Leech lattice. Any Weyl vector can be extended to a hyperbolic plane $U$ in $\II_{25,1}$, such that the orthogonal complement of $U$ in $\II_{25,1}$ is isomorphic to the Leech lattice. Therefore, all Weyl vectors for $\II_{25,1}$ form one $\mathrm{O}(\II_{25,1})$-orbit.  

Given a Weyl vector $w$, a root $r\in \II_{25,1}$ is called a Leech root with respect to $w$ if $(r,w)=-1$. Let $\calL_w$ denote the set of all Leech roots with respect to $w$. 

For a Weyl vector $w$, we put
\[
\calC_w\coloneqq \{x\in \calD\cup (-\calD) \big{|} (x, v)<0,\text{ for all } v\in \calL_w\}
\]

The following theorem due to Conway is crucial for the proof of Proposition \ref{proposition: gaberdiel}, and will also play an important role in the proof of Lemma \ref{lemma: main}.
\begin{thm}[Conway]
\label{theorem: conway}
The map $\varphi\colon w\mapsto \calC_w$ defines a bijection between the set of Weyl vectors and the set of Conway chambers. A Weyl vector $w$ belong to $\overline{\calD}$ if and only if $\calC_w$ is contained in $\calD$. Furthermore, a root $v$ is simple with respect to $\calC_w$ if and only if $v\in \calL_w$.
\end{thm}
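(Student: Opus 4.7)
The plan is to exploit the splitting that a Weyl vector $w$ induces. Extend $w$ to a hyperbolic plane $U = \langle w, w'\rangle$ with $(w,w')=-1$, so that $\II_{25,1} \cong U \oplus \LL$ and every root writes uniquely as $r = \lambda w + \mu w' + \ell$ with $\ell\in \LL$. The Leech-root condition $(r,w)=-1$ forces $\mu=1$, and then $(r,r)=2$ fixes $\lambda = \tfrac{1}{2}(\ell,\ell)-1$; this gives a canonical bijection $\LL \to \calL_w$, $\ell \mapsto r_\ell$. A direct computation yields $(r_\ell, r_{\ell'}) = \tfrac{1}{2}(\ell-\ell', \ell-\ell') - 2$, which is $\ge -2$ with equality forbidden because $\LL$ has no roots; thus distinct Leech roots pair to $-1$ or a non-negative integer, the characteristic sign pattern of a collection of simple roots.

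Next I would verify that $\calC_w$ is nonempty and establish the equivalence $w \in \overline{\calD} \iff \calC_w \subset \calD$. For the test point $x_t = w + tw'$ with small $t > 0$, one computes $(x_t, x_t) = -2t < 0$ and $(x_t, r_\ell) = -t(\tfrac{1}{2}(\ell,\ell)-1) - 1$, which is negative for $t$ sufficiently small uniformly in $\ell$ since only finitely many elements of $\LL$ have bounded norm. Hence $x_t \in \calD \cap \calC_w$; the convex cone cut out in $L_\RR$ by the Leech-root inequalities is tangent to the light cone along $\RR_{>0}\cdot w$, and so sits entirely on the $\calD$ side when $w\in \overline\calD$, with the opposite case handled by the sign involution.

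The heart of the proof is to show that every root $v\notin \calL_w$ is strictly separated from $\calC_w$ by some Leech root, so that $\calC_w$ is a single Conway chamber whose simple roots are exactly the Leech roots. Writing $v = aw + bw' + \ell$ and computing $(r_m, v) = -\tfrac{b}{2}(\ell-m,\ell-m) + \tfrac{b}{2}(\ell,\ell) + a - (m,\ell) - b$, the task becomes to produce $m\in\LL$ making $r_m$ violate the $v$-inequality on $\calC_w$. This reduces to a covering-type estimate on $\LL$: every real point of $\LL\otimes\RR$ must be sufficiently close to a lattice vector to force $(\ell-m,\ell-m)$ small relative to $b$. This is Conway's covering-radius theorem for the Leech lattice, and it is the \emph{main obstacle}. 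Once granted, it simultaneously shows $\calC_w$ is a single chamber and identifies $\calL_w$ with its simple-root set.

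Injectivity of $\varphi$ is then transparent: $\calC_w$ determines $\calL_w$ as its set of simple roots, and $w$ is recovered as the unique primitive isotropic vector paired to $-1$ with each element of $\calL_w$. For surjectivity, given an arbitrary Conway chamber $\calC\subset \calD$ with simple-root set $\calS$, one must construct a Weyl vector $w$ with $\calC = \calC_w$. I would argue that the incidence structure of $\calS$ together with the covering theorem for $\LL$ forces the existence of a primitive isotropic vector $w$ satisfying $(w,r)=-1$ for all $r\in\calS$, and then verify directly, using $\calS$ to reconstruct the hyperbolic splitting, that $w^\perp/\ZZ w\cong \LL$. This closes the bijection and completes the proof.
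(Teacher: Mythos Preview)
Your approach differs substantially from the paper's. The paper does not prove Conway's theorem from scratch; it quotes Conway's original result \cite[Chapter~27, Theorem~1]{conway1999spherepackings} for the single explicit Weyl vector $w_0=(0,1,\dots,24\mid 25)$, and then deduces the general statement by transitivity: $\mathrm{O}(\II_{25,1})$ acts transitively on Weyl vectors (giving well-definedness and, combined with transitivity on chambers, surjectivity of $\varphi$), and injectivity follows because $\calL_w$ spans $(\II_{25,1})_\RR$, so $\calC_w=\calC_{w'}$ forces $w-w'\perp\calL_w$ and hence $w=w'$. Your route instead redevelops Conway's original argument directly: the parametrisation $\ell\mapsto r_\ell$, the sign check on $(r_\ell,r_{\ell'})$, and the reduction of ``$\calC_w$ is a single chamber'' to the covering-radius theorem for $\LL$ are exactly the ingredients of Conway's proof, so the core of what you wrote is correct and self-contained where the paper is not.

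The genuine gap is your surjectivity argument. Given an arbitrary Conway chamber $\calC$ with simple-root set $\calS$, you assert that ``the incidence structure of $\calS$ together with the covering theorem for $\LL$ forces the existence of a primitive isotropic vector $w$ with $(w,r)=-1$ for all $r\in\calS$''. This is not a proof: nothing you have established about $\calS$ a priori tells you it is combinatorially a Leech-root system, and the covering theorem is a statement about $\LL$, which you do not yet have in hand for $\calC$. The clean fix is exactly the paper's: once you know $\calC_{w_0}$ is a Conway chamber for one Weyl vector $w_0$, transitivity of the Weyl group on chambers gives $g\in\mathcal{W}_L$ with $g(\calC_{w_0})=\calC$, and then $w\coloneqq g(w_0)$ is a Weyl vector with $\calC_w=\calC$. (A minor point: in your nonemptiness check, the phrase ``for $t$ sufficiently small uniformly in $\ell$ since only finitely many elements of $\LL$ have bounded norm'' is misleading; in fact $(x_t,r_\ell)=-1-t(\tfrac12(\ell,\ell)-1)$ is negative for every $0<t<1$ and every $\ell\in\LL$, with no uniformity issue, because $(\ell,\ell)\ge 4$ for $\ell\ne 0$.)
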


\begin{proof}
This result is due to Conway \cite[Chapter 27, Theorem 1]{conway1999spherepackings}, although our formulation differs. For the reader's convenience, we provide an explanation of how \cite[Chapter 27, Theorem 1]{conway1999spherepackings} implies Theorem~\ref{theorem: conway}.

Let $\mathbb{R}^{25,1}$ denote the standard Lorentzian space with signature $(25,1)$. The even unimodular lattice $\II_{25,1}$ can be realized as a subset of $\mathbb{R}^{25,1}$ consisting of all vectors $(x_0, x_1, \cdots, x_{24} \mid x_{25})$ such that:
\begin{itemize}
\item each coordinate $x_i$ lies in $\ZZ$, or each lies in $\mathbb{Z} + \frac{1}{2}$, and
  \item the sum $x_0 + x_1 + \cdots + x_{24} + x_{25}$ is even, i.e., $\sum\limits_{i=0}^{25} x_i \in 2\ZZ$.
\end{itemize}
Then $w_0\coloneqq (0,1,\cdots, 24\mid 25)$ is a Weyl vector for $\II_{25,1}$. For this Weyl vector, Conway \cite[Chapter 27, Theorem 1]{conway1999spherepackings} proved that $\calC_{w_0}$ is a Conway chamber and $\calL_{w_0}$ is the set of simple roots with respect to $\calC_{w_0}$.

Recall that $\mathrm{O}(\II_{25,1})$ acts transitively on the set of all Weyl vectors of $\II_{25,1}$. Then for any Weyl vector $w$ in $\II_{25,1}$, there exists $g\in \mathrm{O}(\II_{25,1})$ such that $w=g(w_0)$. This implies that $\calL_{w}=g(\calL_{w_0})$ and $\calC_{w}=g(\calC_{w_0})$. Hence $\calC_w$ is a Conway chamber with $\calL_w$ the set of simple roots. Since $\mathrm{O}(\II_{25,1})$ acts transitively on the set of Conway chambers, we have surjectivity of $\varphi$.

Next we prove the injectivity of $\varphi$. Suppose two Weyl vector $w$ and $w'$ satisfy $\calC_w=\calC_{w'}$, then we have $\calL_w=\calL_{w'}$. Thus $w-w'$ is orthogonal to all elements of $\calL_w$. The set $\calL_w$, which consists of all simple roots with respect to $\calC_w$, spans the vector space $(\II_{25,1})_\RR$. Therefore, we must have $w=w'$. 

See also \cite[Remark 3.8]{brandhorst2023borcherds} for an alternative argument for the injectivity.
\end{proof}

\section{Proof of Lemma \ref{lemma: main} and Proposition \ref{proposition: extended version}}
\label{section: proof}

As explained before, we only need to prove $(2)\Rightarrow (3)$ in Proposition \ref{proposition: extended version}. Now we have a primitive embedding $S\oplus A_1\hookrightarrow \II_{25,1}$. Under this embedding, we abuse the notation and regard $S$ as a primitive sublattice of $\II_{25,1}$. Let $\alpha$ denote the image of one generator of $A_1$ under the embedding.


From now on we set $L=\II_{25,1}$, and let $\calD$ be the associated Lobachevsky cone. Denote by $S^{\perp}_\calD$ the subset of $\calD$ consisting of elements orthogonal to $S$. Recall that for a root $\beta$, we write $H_\beta$ to denote the corresponding reflection hyperplane in $(\II_{25,1})_\RR$. 

Then we have $\dim S^{\perp}_\calD\ge 2$ and the intersection $S^{\perp}_\calD \cap H_{\alpha}$ has codimension one in $S^{\perp}_\calD$, namely 
\[
\dim (S^{\perp}_\calD \cap H_{\alpha})=\dim S^{\perp}_\calD -1  \ge 1.
\]

In the original approach by Gaberdiel--Hohenegger--Volpato \cite[Appendix B.2]{gaberdiel2012symmetries}, they found a Conway chamber $\calC_w$ such that $S^{\perp}_\calD \cap \calC_w\ne \emptyset$. Then they utilized Theorem \ref{theorem: conway} to conclude Proposition \ref{proposition: gaberdiel}. The main novelty of this paper is the following claim: \emph{there exists a Conway chamber $\calC_{w}$ for which $S^{\perp}_\calD \cap \calC_w$ is nonempty and $H_\alpha$ is a wall supporting $\calC_w$.} 

The additional requirement on $\calC_w$ that $H_\alpha$ is a wall supporting $\calC_w$ will guarantee that the embedding $S\hookrightarrow \LL$ we are about to construct is primitive.

Next we prove the claim. A key observation is that if a root $\beta\in \II_{25,1}$ satisfies 
\[
S^{\perp}_\calD\cap H_{\alpha} \subset H_\beta,
\]
then $\beta$ lies in $S\oplus \ZZ \alpha$. Since $S$ is positive definite and rootless, it follows that $\beta=\pm \alpha$. 

Therefore, for any root $\beta\ne \pm \alpha$, the intersection $S^{\perp}_\calD \cap H_{\alpha}\cap H_\beta$ is either empty or a  hyperplane in $S^{\perp}_\calD \cap H_{\alpha}$. The union
\[
\bigcup_{\beta\colon \beta^2=2, \beta\ne \pm\alpha} S^{\perp}_\calD \cap H_{\alpha}\cap H_\beta
\]
therefore forms a locally finite hyperplane arrangement in $S^{\perp}_\calD \cap H_{\alpha}$. As a consequence, there exists an element $x\in S^{\perp}_\calD \cap H_{\alpha}$ that does not lie in any reflection hyperplane $H_\beta$ with $\beta\ne \pm \alpha$.

Now take an open neighbourhood $U$ of $x$ in $\calD$, such that $U\cap H_\beta$ is empty for any roots $\beta\ne \pm \alpha$. The intersection $U\cap H_\alpha$ is a nonempty open subset of $H_\alpha$. We write 
\[
U\backslash H_\alpha = U_1\sqcup U_2
\]
where $U_1, U_2$ are open subsets lying on either sides of $H_\alpha$. By shrinking $U$ if necessary, we may assume that both $U_1, U_2$ are connected. Since $S^\perp_\calD$ passes through $x$ and is not contained in $H_\alpha$, it follows that $S^\perp_\calD$ intersects both $U_1$ and $U_2$.

Let $\calC_w$ be the Conway chamber containing $U_1$. Then $H_\alpha$ is wall of $\calC_w$ and $S^{\perp}_\calD \cap \calC_w\ne \emptyset$. The claim is now proved. 

The nonemptyness of $S_\calD^{\perp}\cap \calC_w$ implies that $w\perp S$. The argument is as follows (which is due to \cite{gaberdiel2012symmetries}). The action of $G$ on $S$ extends to an action on $\II_{25,1}$ such that $G$ acts trivially on the orthogonal complement of $S$ in $\II_{25,1}$. In particular, the subset $S^{\perp}_\calD$ is pointwise fixed by $G$. Since $S^{\perp}_\calD \cap \calC_w\ne \emptyset$, it follows that $G$ preserves the Conway chamber $\calC_w$. By the uniqueness of the Weyl vector associated with a Conway chamber (see Theorem \ref{theorem: conway}), we conclude that $G$ fixes $w$. This implies $w\perp S$. Now the composition of
\begin{equation}
\label{equation: embedding}
S\hookrightarrow w^\perp \to w^\perp/ \ZZ w\cong \LL,
\end{equation}
provides a lattice embedding of $S$ into the Leech lattice $\LL$.  

By the claim, we have the fact that $H_\alpha$ is a wall supporting $\calC_w$. We now show that this implies the map $S\hookrightarrow \LL$ from \eqref{equation: embedding} is primitive.

Since $H_\alpha$ is a wall supporting $\calC_w$, we know either $\alpha$ or $-\alpha$ is a Leech root with respect to $w$. Therefore, the vectors $\alpha$ and $w$ span a hyperbolic plane $U=\langle \alpha, w\rangle$, whose orthogonal complement $\langle \alpha, w\rangle^\perp$ in $\II_{25,1}$ is hence positive and unimodular.

We have the following sequence of maps
\[
S\hookrightarrow \langle \alpha, w\rangle^\perp \hookrightarrow w^\perp \to w^\perp/\ZZ w\cong \LL.
\]
The first map $S\hookrightarrow \langle \alpha, w\rangle^\perp$ is primitive. The composition $\langle \alpha, w\rangle^\perp\to \LL$ is an injective morphism between two positive unimodular lattices of the same rank, hence a lattice isomorphism. Thus $S$ admits a primitive embedding into the Leech lattice. This completes the proof for Proposition \ref{proposition: extended version} and Lemma \ref{lemma: main}.

\begin{rmk}
For the case $\rank(S)=24$, conditions $(1)$ and $(3)$ in Proposition \ref{proposition: extended version} are both equivalent to $S\cong \LL$. As a consequence, condition $(2)$ is also equivalent to $S\cong \LL$ in this case.
\end{rmk}

\bigskip

\textbf{Acknowledgement}: I thank Matthias Sch\"utt for pointing out \cite[Corollary 4.19]{marquand2025finite} to me. This work was finished during my visit to University of Chicago in June 2025. I thank Benson Farb for inviting me to University of Chicago, and thank Eduard Looijenga for stimulating discussion on this problem. I thank Chenglong Yu for helpful discussion on hyperplane arrangements and hyperbolic Weyl chambers. I thank Stevell Muller for many useful comments after reading the initial manuscript. The author is supported by NSFC 12301058.

\bibliography{reference}

\providecommand{\bysame}{\leavevmode\hbox to3em{\hrulefill}\thinspace}
\providecommand{\MR}{\relax\ifhmode\unskip\space\fi MR }
\providecommand{\MRhref}[2]{%
  \href{http://www.ams.org/mathscinet-getitem?mr=#1}{#2}
}
\providecommand{\href}[2]{#2}
\begin{thebibliography}{BMW25}

\bibitem[BM22]{brandhorst2023borcherds}
S.~Brandhorst and G.~Mezzedimi, \emph{{Borcherds lattices and K3 surfaces of
  zero entropy}}, arXiv preprint arXiv:2211.09600 (2022).

\bibitem[BMW25]{billi2025EPW}
S.~Billi, S.~Muller, and T.~Wawak, \emph{On birational automorphisms of double
  {EPW}-cubes}, Math. Nachr. \textbf{298} (2025), no.~6, 1943--1963.

\bibitem[CS99]{conway1999spherepackings}
J.~H. Conway and N.~J.~A. Sloane, \emph{Sphere packings, lattices and groups},
  third ed., Grundlehren der Mathematischen Wissenschaften, vol. 290,
  Springer-Verlag, New York, 1999, With additional contributions by E. Bannai,
  R. E. Borcherds, J. Leech, S. P. Norton, A. M. Odlyzko, R. A. Parker, L.
  Queen and B. B. Venkov.

\bibitem[GHV12]{gaberdiel2012symmetries}
M.R. Gaberdiel, S.~Hohenegger, and R.~Volpato, \emph{{Symmetries of K3 sigma
  models}}, Communications in Number Theory and Physics \textbf{6} (2012),
  no.~1, 1--50.

\bibitem[HM16]{HM16}
G.~H{\"o}hn and G.~Mason, \emph{The 290 fixed-point sublattices of the leech
  lattice}, Journal of algebra \textbf{448} (2016), 618--637.

\bibitem[HM19]{hohn2019finite}
G.~H\"ohn and G.~Mason, \emph{Finite groups of symplectic automorphisms of
  hyperk\"ahler manifolds of type {$K3^{[2]}$}}, Bull. Inst. Math. Acad. Sin.
  (N.S.) \textbf{14} (2019), no.~2, 189--264.

\bibitem[Huy16a]{Huy16}
D.~Huybrechts, \emph{{Lectures on K3 surfaces}}, vol. 158, Cambridge University
  Press, 2016.

\bibitem[Huy16b]{huybrechts2016derived}
\bysame, \emph{On derived categories of k3 surfaces, symplectic automorphisms
  and the {Conway} group}, Development of moduli theory—Kyoto 2013, vol.~69,
  Mathematical Society of Japan, 2016, pp.~387--406.

\bibitem[LZ22]{laza2022symplectic}
R.~Laza and Z.~Zheng, \emph{Automorphisms and periods of cubic fourfolds},
  Math. Z. \textbf{300} (2022), no.~2, 1455--1507.

\bibitem[MM25]{marquand2025finite}
L.~Marquand and S.~Muller, \emph{Finite groups of symplectic birational
  transformations of {IHS} manifolds of {$OG10$} type}, Forum of Mathematics,
  Sigma \textbf{13} (2025), e117 1–40.

\bibitem[Mon13]{mongardi2013automorphisms}
G.~Mongardi, \emph{Automorphisms of hyperk\"ahler manifolds}, arXiv preprint
  arXiv:1303.4670 (2013).

\bibitem[Mon16]{mongardi2016towards}
\bysame, \emph{Towards a classification of symplectic automorphisms on
  manifolds of {$K3^{[n]}$} type}, Mathematische Zeitschrift \textbf{282}
  (2016), 651--662.

\bibitem[Nik80]{nikulin1980integral}
V.V. Nikulin, \emph{Integral symmetric bilinear formsand some of their
  applications}, Mathematics of the USSR-Izvestiya \textbf{14} (1980), no.~1,
  103.

\bibitem[Ogu83]{Ogu83}
A.~Ogus, \emph{A crystalline {Torelli} theorem for supersingular {K3}
  surfaces}, Arithmetic and geometry, Springer, 1983, pp.~361--394.

\bibitem[OS24]{OS24}
H.~Ohashi and M.~Sch{\"u}tt, \emph{Finite symplectic automorphism groups of
  supersingular k3 surfaces}, arXiv preprint arXiv:2405.06341 (2024).

\bibitem[WZ24]{wang2024finite}
B.~Wang and Z.~Zheng, \emph{{Finite Groups of Symplectic Automorphisms of
  Supersingular K3 surfaces in Odd Characteristics}}, arXiv preprint
  arXiv:2406.14499 (2024).

\end{thebibliography}
\end{document}